\documentclass[12pt]{amsart}

\textheight 8.5in
\textwidth 6 in
\topmargin 0.0cm
\oddsidemargin 0.5cm \evensidemargin 0.5cm
\parskip 0.0cm

\usepackage{setspace}
\usepackage{amsmath, amssymb,amsthm, amsfonts, latexsym, mathrsfs}
\usepackage{hyperref}
\usepackage{color}
\usepackage{graphicx}
\usepackage[normalem]{ulem} 

\newtheorem{theo}{Theorem}[section]
\newtheorem{conj}{Conjecture}[section]
\newtheorem{formula}{Formula}[section]
\newtheorem*{thm*}{Theorem}

\newtheorem*{prop*}{Proposition}

\theoremstyle{remark}
\newtheorem{rem}{Remark}[section]

\renewcommand{\div}{\mbox{div}}

\newcommand{\Ric}{\mbox{Ric}}
\newcommand{\R}{\mathbb R}

\numberwithin{equation}{section}
\newcommand{\be}{\begin{equation}}
\newcommand{\ee}{\end{equation}}
\def\p{\partial}

\def\la{\langle}
\def\ra{\rangle}

\def\Pi{\displaystyle{\mathbb{II}}}
\def\Ric{\text{\rm Ric}}

\def\vh{\vspace{.2cm}}

\def\m{\mathfrak{m}}

\def\bg{\bar{g}}

\def\bee{\begin{equation*}}
\def\eee{\end{equation*}}

\def\vh{\vspace{.3cm}}

\def\So{\Sigma_{_{O}}}
\def\Sh{\Sigma_{H}}

\def\bg{\bar g}
\def\bSigma{\bar \Sigma}
\def\bF{\bar F}
\def\bH{\bar H}

\def\bnu{\bar \nu} 
\def\bA{\bar A}

\def\mb{\mathfrak{m}_{_B}}

\def\R{\mathbb{R}}
\def\StM{\mathbb{M}^{3}_m}

\def\Ric{\mathrm{Ric}}
\def\bg{\bar g}

\def\bRic{\overline{\Ric}}  
\def\So{\Sigma_{_O}}      
\def\Sh{\Sigma_{_H}}      
\def\fg{\breve{g}}              
\def\fH{{H}}                       
\def\m{\mathfrak{m}}        
\def\N{\mathbb{N}}           
\def\mB{\m_{_B}}

\def\tM{\tilde M}
\def\tg{\tilde g}
\def\S{\mathcal{S}}
\def\ESwy{E^S_{_{WY}}}

\newcounter{mnotecount}[section]

\setcounter{equation}{0}

\begin{document}

\title{Variation and rigidity of quasi-local mass}

\author[Siyuan Lu]{Siyuan Lu}
\address{Department of Mathematics, Rutgers University, 110 Frelinghuysen Road, Piscataway, NJ 08854, USA.}
\email{siyuan.lu@math.rutgers.edu}

\author[Pengzi Miao]{Pengzi Miao}
\address{Department of Mathematics, University of Miami, Coral Gables, FL 33146, USA.}
\email{pengzim@math.miami.edu}

\thanks{The first named author's research was partially supported by  CSC fellowship. The second named author's research was partially supported by the Simons Foundation Collaboration Grant for Mathematicians \#281105.}

\begin{abstract}
Inspired by the work of Chen-Zhang \cite{Chen-Zhang}, 
we derive an evolution formula for the Wang-Yau quasi-local energy in reference to 
a static space, introduced by Chen-Wang-Wang-Yau \cite{CWWY}.
If the reference static space represents a mass minimizing, static extension of the initial surface $\Sigma$, we observe that the derivative of the Wang-Yau quasi-local energy is equal to the derivative of 
the Bartnik quasi-local mass at $\Sigma$.  

Combining the evolution formula for the quasi-local energy with a localized  Penrose inequality proved in \cite{Lu-Miao}, 
we prove a rigidity theorem for compact $3$-manifolds with nonnegative scalar curvature, with boundary. 
This rigidity theorem in turn gives a characterization of the equality case of the localized Penrose inequality in $3$-dimension. 

\end{abstract}

\maketitle 

\markboth{Siyuan Lu and Pengzi Miao}{Variation and rigidity of quasi-local mass}

\section{Introduction}

The purpose in this paper is twofold. 
We derive a derivative formula for the integral 
\be \label{eq-NH-integral}
 \int_{\Sigma_t} N (\bH - H ) \, d \sigma 
\ee 
along a family of hypersurfaces $ \{ \Sigma_t \} $ evolving in a Riemannian manifold $(M, g)$
with an assumption that $ \Sigma_t $ can be isometrically embedded in a static space $(\N, \bg)$ 
as a comparison hypersurface $\bSigma_t$. Here $ H$, $\bH$ are the mean curvature of $ \Sigma_t$, 
$ \bSigma_t$ in $(M, g)$, $(\N, \bg)$, respectively, and $N$ is the static potential  on $(\N, \bg)$. 
When $ \{ \Sigma_t \}$ is a family of closed $2$-surfaces in a $3$-manifold $(M,g)$, 
integral  \eqref{eq-NH-integral} represents 
the Wang-Yau quasi-local energy in reference to the static space $(\N, \bg)$, 
introduced by Chen-Wang-Wang-Yau \cite{CWWY}.
In this case, if $(\N, \bg)$ represents a mass minimizing, static extension of the initial surface $\Sigma_0$,
we find that  the derivative of the quasi-local energy  
agrees with the derivative of the Bartnik quasi-local mass at $\Sigma_0$ 
(see \eqref{eq-evolution-qlnergy-mb} in Section \ref{sec-evolution-qlmass}). 

We also apply the derivative formula of \eqref{eq-NH-integral} 
to prove a rigidity theorem for compact $3$-manifolds with nonnegative scalar curvature, with boundary. 
Precisely, we have

\begin{theo}  \label{thm-main}
Let  $(\Omega, \fg)$ be a compact, connected,  orientable, $3$-dimensional Riemannian manifold 
with nonnegative scalar curvature, with boundary $\p \Omega$. 
Suppose $ \p \Omega$ is the disjoint union of two pieces, $ \So $ and $ \Sh$, where 
\begin{enumerate}
\item[(i)]   $\So$ has positive mean curvature $\fH$; and  
\item[(ii)]  $\Sh$ is a minimal hypersurface
(with one or more components)
and there are no other closed  minimal hypersurfaces in $ (\Omega, \fg)$.  
\end{enumerate}
Let $ \StM$ be a  $3$-dimensional spatial Schwarzschild manifold with  mass $ m >0 $
outside the horizon.
Suppose $ \So$ is isometric to a convex surface $\Sigma \subset \StM  $ which 
encloses  a   domain $\Omega_m $ with  the  horizon $ \p \StM $.
Suppose $ \bRic(\nu, \nu) \le 0 $ on $ \Sigma$, where ${\bRic}$ is the  Ricci curvature of 
the Schwarzschild metric $\bg$ on  $\StM$ and $ \nu $ is the outward unit normal to $ \Sigma$. 
Let $H_m  $ be  the mean curvature of $ \Sigma$ in $ \StM$ and 
$ | \Sh |$ be  the area of $\Sh $ in $(\Omega, \fg)$.
If  $ H =  H_m$ and   $\sqrt{ \frac{ | \Sh | }{ 16 \pi}  } = m$,
then $(\Omega, \fg)$ is isometric to $ (\Omega_m, \bg)$.
\end{theo}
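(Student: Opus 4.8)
The plan is to exhibit $(\Omega,\fg)$ as realizing the equality case of the localized Penrose inequality of \cite{Lu-Miao}, and then to run the evolution formula for \eqref{eq-NH-integral} along a foliation of $\Omega$ to convert that equality into pointwise geometric rigidity on each leaf, which finally integrates up to the asserted isometry with $(\Omega_m,\bg)$. \emph{Step 1 (reduction to equality).} Since $(\Omega,\fg)$ has nonnegative scalar curvature, $\So$ is isometric to the convex surface $\Sigma\subset\StM$ enclosing the horizon with $\bRic(\nu,\nu)\le 0$ along $\Sigma$, and $\Sh$ is (by hypothesis (ii)) the outermost minimal hypersurface, the hypotheses of the localized Penrose inequality of \cite{Lu-Miao} are met: it bounds $\sqrt{|\Sh|/(16\pi)}$ in terms of $m$ plus a correction term driven by $H-H_m$ and by $\bRic(\nu,\nu)$, a term which vanishes when $H\equiv H_m$. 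Hence the two scalar hypotheses $H=H_m$ and $\sqrt{|\Sh|/(16\pi)}=m$ force \emph{every} inequality in the argument of \cite{Lu-Miao} to be saturated; in particular $\bRic(\nu,\nu)\equiv 0$ along $\Sigma$, and the Wang--Yau energy of $\So$ relative to the static reference $(\StM,\bg)$ vanishes.

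\emph{Step 2 (foliation and monotonicity).} I would then choose a foliation $\{\Sigma_t\}_{t\in[0,T]}$ of $\overline\Omega$ with $\Sigma_0=\So$ and $\Sigma_T=\Sh$, whose leaves sweep monotonically inward from $\So$ toward $\Sh$ (e.g.\ level sets of a solution of a suitable elliptic boundary value problem on $(\Omega,\fg)$), using hypothesis (ii) to guarantee that the foliation exists up to $\Sh$ without meeting another minimal leaf. Along the foliation, each leaf $\Sigma_t$ carries Bartnik data $(g_{\Sigma_t},H_t)$ admitting an optimal isometric embedding $\bar\Sigma_t\subset\StM$ — this is where convexity of $\Sigma$ and the sign of $\bRic(\nu,\nu)$ enter, via \cite{CWWY} — so the integral \eqref{eq-NH-integral}, and hence the Wang--Yau energy $\mathcal E(t)$ relative to $(\StM,\bg)$, is defined for all $t$. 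Feeding this into the evolution formula of Section~\ref{sec-evolution-qlmass} (in the spirit of \cite{Chen-Zhang}), and using $R\ge 0$ together with the static vacuum equations for the Schwarzschild potential $N$, one gets monotonicity of $\mathcal E(t)$ (equivalently, of $\mathcal E(t)$ together with the horizon quantity $\sqrt{|\Sigma_t\cap\Sh|/(16\pi)}$). Its endpoint values are pinned by Step 1: at $t=0$ the integrand of \eqref{eq-NH-integral} vanishes because $H=H_m$, while at $t=T$ it reduces to $\sqrt{|\Sh|/(16\pi)}$ since $\Sh$ is minimal; combined with the equality from Step 1 this forces $\mathcal E(t)$ to be constant, so the (sign-definite) integrand in the evolution formula vanishes identically along the whole foliation.

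\emph{Step 3 (infinitesimal rigidity and integrating up).} From the vanishing of that integrand I would read off, leaf by leaf, that $R\equiv 0$, that $\bRic(\bar\nu,\bar\nu)\equiv 0$ along $\bar\Sigma_t$ (consistent with Step 1), and that the traceless second fundamental form — hence, given the matched induced metrics and mean curvatures, the full second fundamental form — of $\Sigma_t$ in $(\Omega,\fg)$ coincides with that of $\bar\Sigma_t$ in $\StM$, as do the lapse functions of the two foliations. Writing $\fg$ and $\bg$ in Gaussian-type coordinates adapted respectively to $\{\Sigma_t\}$ and to the comparison foliation $\{\bar\Sigma_t\}$ of $\Omega_m$, this is precisely the statement that the two metrics solve the same first-order evolution system in the foliation parameter with identical data on $\So\cong\Sigma$; uniqueness for that system produces an isometry $(\Omega,\fg)\cong(\Omega_m,\bg)$ taking $\Sh$ to the horizon $\p\StM$.

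\emph{Main obstacle.} The delicate point is Step 2: one must organize a single foliation of all of $\Omega$, reaching $\Sh$, along which each leaf genuinely admits an optimal isometric embedding into the fixed Schwarzschild reference so that \eqref{eq-NH-integral} and its evolution formula are available globally — the convexity and curvature conditions on $\So$ secure this only near $\So$, and carrying it into the interior (or reformulating the argument so that only the abstract Bartnik data and its optimal embedding are needed) is where the real work lies. Controlling the foliation as $t\to T$, where the mean curvature of the leaves degenerates to that of the minimal surface $\Sh$, is the other technical hurdle, and is exactly the point at which hypothesis (ii) is indispensable.
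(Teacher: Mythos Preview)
Your approach has a genuine gap, and the paper's proof is quite different from what you propose.

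The paper does \emph{not} attempt a global foliation of $\Omega$ from $\So$ to $\Sh$. Instead it uses the evolution formula only infinitesimally at $\So$: one takes a family $\{\Sigma_t\}_{-\epsilon<t\le 0}$ just inside $\So$, invokes the Li--Wang openness result to obtain nearby isometric embeddings $\bar\Sigma_t\subset\StM$, and applies Formula~\ref{formula-I} at $t=0$. Since $H=H_m$ there, the derivative of $\int_{\Sigma_t}N(\bH-H)\,d\sigma$ equals $\tfrac12\int_{\So}N(|A-\bA|^2+R)\,d\sigma\ge 0$, with strict inequality unless $A=\bA$. If $A\ne\bA$, then for small $t<0$ the integral becomes negative; but the localized Penrose inequality (Theorem~\ref{thm-L-M}), applied to the region between $\Sigma_t$ and $\Sh$, forces it to be nonnegative. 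This contradiction yields $A=\bA$ on $\So$. The global rigidity then comes not from propagating a foliation inward, but from gluing $(\Omega,\fg)$ to the Schwarzschild exterior $\StM\setminus\Omega_m$ across $\So$ (the match $A=\bA$ makes the glued metric $C^{1,1}$) and invoking the rigidity of Bray's conformal-flow proof of the Riemannian Penrose inequality on the resulting asymptotically flat manifold.

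Your Step~2 would not work as stated, for reasons beyond the obstacle you flag. First, the evolution formula \eqref{eq-evolution-qlmass} is not sign-definite away from $t=0$: the integrand contains $\tfrac12|A-\bA|^2-\tfrac12|H-\bH|^2$, and once $H\ne\bH$ there is no monotonicity for $\mathcal E(t)$. Second, your endpoint claim at $t=T$ is incorrect: $\int_{\Sigma_T}N(\bH-H)\,d\sigma$ on the minimal $\Sh$ equals $\int_{\Sh}N\bH\,d\sigma$, not $\sqrt{|\Sh|/(16\pi)}$, and there is no reason the putative embedding $\bar\Sigma_T$ should be the horizon (indeed $\Sh$ may have several components and need not be a round sphere a priori). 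Third, even granting embeddings leaf by leaf, the convexity and $\bRic(\bar\nu,\bar\nu)\le 0$ conditions needed to invoke Theorem~\ref{thm-L-M} on inner leaves are not available globally. The paper sidesteps all of this by using the variation only at the boundary surface and outsourcing the interior rigidity to Bray's theorem.
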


Theorem \ref{thm-main} gives a characterization of the equality case of 
a localized Penrose  inequality proved in \cite{Lu-Miao}.

\begin{theo} [\cite{Lu-Miao}]  \label{thm-L-M}
Let  $(\Omega, \fg)$ be a compact, connected,  orientable, $3$-dimensional Riemannian manifold 
with nonnegative scalar curvature, with boundary $\p \Omega$. 
Suppose $ \p \Omega$ is the disjoint union of two pieces, $ \So $ and $ \Sh$, where 
\begin{enumerate}
\item[(i)]   $\So$ has positive mean curvature $\fH$; and  
\item[(ii)]  $\Sh$, if nonempty,  is a minimal hypersurface
(with one or more components)
and there are no other closed  minimal hypersurfaces in $ (\Omega, \fg)$.  
\end{enumerate}
Let $ \StM$ be a  $3$-dimensional spatial Schwarzschild manifold with  mass $ m >0 $
outside the horizon.
Suppose $ \So$ is isometric to a convex surface $\Sigma \subset \StM  $ which 
encloses  a  domain $\Omega_m $ with  the  horizon $ \p \StM $.
Suppose $ \bRic(\nu, \nu) \le 0 $ on $ \Sigma$, where ${\bRic}$ is the  Ricci curvature of 
the Schwarzschild metric $\bg$ on  $\StM$ and $ \nu $ is the outward unit normal to $ \Sigma$. 
Then
\be \label{ineq-main}
m + \frac{1}{ 8 \pi } \int_\Sigma N ( H_m  - H )   \, d \sigma  \geq 
 \sqrt{ \frac{ | \Sh | }{ 16 \pi}  } .
\ee
Here $N$ is the static potential on $ \StM$, 
$H_m$ is the mean curvature of $ \Sigma$ in $ \StM$,  and 
$ | \Sh |$ is the area of $\Sh $ in $(\Omega, \fg)$. 
Furthermore, equality in \eqref{ineq-main} holds if and only if  
\be \label{eq-H-complete}
  H =  H_m, \  \sqrt{ \frac{ | \Sh | }{ 16 \pi}  } = m .
\ee
\end{theo}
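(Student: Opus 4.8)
The plan is to realize the quantity $m + \frac{1}{8\pi}\int_\Sigma N(H_m - H)\,d\sigma$ as a Schwarzschild-referenced quasi-local mass of $(\Omega,\fg)$ at its outer boundary $\So$, and to bound it below by $\sqrt{|\Sh|/16\pi}$ by completing $(\Omega,\fg)$ to an asymptotically flat manifold and invoking the Riemannian Penrose inequality. Concretely, I would first construct a Shi--Tam type collar $(\C,\tg)$ exterior to $\Sigma$ in the spatial Schwarzschild manifold $\StM$: starting from the convex surface $\Sigma$, foliate the region outside $\Sigma$ in $\StM$ by a family $\{\Sigma_t\}_{t\ge 0}$ with $\Sigma_0=\Sigma$, and prescribe on this foliation a quasi-spherical metric $\tg$, a controlled deformation of $\bg$, whose induced metric on $\Sigma$ agrees with that of $\So$, whose boundary mean curvature equals the given $H$ of $\So$, and which is asymptotically flat. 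The deformation factor solves a parabolic equation along the foliation, and the convexity of $\Sigma$ together with the hypothesis $\bRic(\nu,\nu)\le 0$ on $\Sigma$ is exactly what controls the sign of the resulting curvature terms and yields $\tg$ with nonnegative scalar curvature.

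Next I would attach $(\C,\tg)$ to $(\Omega,\fg)$ along $\So\cong\Sigma$ to form a complete, asymptotically flat manifold $(\tM,\tg)$ with inner minimal boundary $\Sh$. Since the induced metrics coincide and the boundary mean curvatures are arranged to match, no negative distributional scalar curvature is created across the seam, so $(\tM,\tg)$ has nonnegative scalar curvature in the sense required for the Penrose inequality, with $\Sh$ as an outermost horizon. The construction is designed so that the monotone quasi-local mass along the collar foliation takes the value $m + \frac{1}{8\pi}\int_\Sigma N(H_m-H)\,d\sigma$ at $\Sigma$ and is non-increasing outward with limit the ADM mass of $(\tM,\tg)$; this is the static-reference analogue of the Shi--Tam monotonicity, and the static potential $N$ enters precisely as the weight dictated by the background $\bg$. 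Consequently
\[
 m + \frac{1}{8\pi}\int_\Sigma N(H_m - H)\,d\sigma \ \ge\ m_{\mathrm{ADM}}(\tM) \ \ge\ \sqrt{\frac{|\Sh|}{16\pi}},
\]
where the last inequality is the Riemannian Penrose inequality (Huisken--Ilmanen, Bray) applied to $(\tM,\tg)$. This proves \eqref{ineq-main}.

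For the equality case, the reverse implication is immediate: if $H=H_m$ the integral vanishes and the left side is $m$, while $\sqrt{|\Sh|/16\pi}=m$ makes the right side $m$ as well. For the forward implication, equality in \eqref{ineq-main} forces equality in both steps of the displayed chain. Equality in the Penrose inequality identifies $(\tM,\tg)$ with a spatial Schwarzschild manifold of mass $\sqrt{|\Sh|/16\pi}$, and equality in the monotonicity forces the collar $(\C,\tg)$ to be rigid, hence isometric to the corresponding Schwarzschild exterior; this in turn makes the seam smooth and yields $H=H_m$ on $\Sigma$. With $H=H_m$ the integral term vanishes, so $m=m_{\mathrm{ADM}}(\tM)=\sqrt{|\Sh|/16\pi}$, which is \eqref{eq-H-complete}.

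The main obstacle is the collar construction together with its monotonicity formula: establishing that the quasi-spherical deformation of the Schwarzschild background can be solved all the way to infinity, remains asymptotically flat, and carries nonnegative scalar curvature is the technical heart, and it is exactly here that $\bRic(\nu,\nu)\le 0$ and the convexity of $\Sigma$ are consumed in controlling the signs of the terms governing the evolution of the mean curvature and the scalar curvature along the foliation. The second delicate point is the rigidity in the equality case, where one must argue that the only way the quasi-local mass can remain constant along the collar is for the deformation factor to be identically that of Schwarzschild, thereby forcing $H=H_m$.
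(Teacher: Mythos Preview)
This theorem is quoted in the paper as a result from \cite{Lu-Miao}; the present paper does not supply its own proof but only applies it.  From the hints scattered in the proof of Theorem~\ref{thm-main} (the flow in $\StM$ satisfying equation~(4.2) of \cite{Lu-Miao}, the role of $\bRic(\nu,\nu)\le 0$ in keeping the leaves of positive scalar curvature via Lemma~3.8 there, and the estimates of Lemmas~3.6, 3.7, 3.11), your outline is precisely the strategy of \cite{Lu-Miao}: build a Shi--Tam type collar on the Schwarzschild background exterior to $\Sigma$, use monotonicity of the static-weighted quasi-local mass along the foliation, glue to $(\Omega,\fg)$, and invoke the Riemannian Penrose inequality on the resulting asymptotically flat manifold with corner.

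One caution concerning your equality argument.  You invoke rigidity of the Riemannian Penrose inequality on the glued manifold $(\tM,\tg)$, but $\tg$ is only Lipschitz across the seam $\Sigma$, and Penrose rigidity for metrics with corners is exactly the subtle point this paper treats separately (see the remark following the proof of Theorem~\ref{thm-main} and the reference \cite{ShiWangYu}).  For the equality statement \eqref{eq-H-complete} as actually written you do not need Penrose rigidity: equality in the Shi--Tam monotonicity alone forces the collar deformation factor to be identically one, so the collar is the Schwarzschild exterior and its inner boundary mean curvature equals $H_m$, giving $H=H_m$; the integral then vanishes and $m=m_{\mathrm{ADM}}(\tM)=\sqrt{|\Sh|/16\pi}$.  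The stronger conclusion that $(\Omega,\fg)$ itself is isometric to a Schwarzschild region is the content of Theorem~\ref{thm-main}, not part of \eqref{eq-H-complete}.
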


By Theorems \ref{thm-main} and \eqref{eq-H-complete}, we have the following rigidity statement concerning 
the equality case of \eqref{ineq-main}.

\begin{theo} 
Equality in \eqref{ineq-main} in Theorem \ref{thm-L-M} holds if and only if $(\Omega, \fg)$ is 
isometric to $(\Omega_m, \bg)$.
\end{theo}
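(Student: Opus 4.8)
The plan is to obtain this statement as a direct synthesis of Theorem~\ref{thm-main} and the equality characterization already contained in Theorem~\ref{thm-L-M}; the substantive analysis has been done in those two results, so what remains is a short bookkeeping argument. Throughout, one works inside the standing hypotheses of Theorem~\ref{thm-L-M}, so the inequality \eqref{ineq-main} and all the objects appearing in it are available.

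First I would treat the easy implication: assume $(\Omega, \fg)$ is isometric to $(\Omega_m, \bg)$. Under such an isometry $\So$ is identified with the convex surface $\Sigma \subset \StM$, so its mean curvature $H$ in $(\Omega,\fg)$ equals the mean curvature $H_m$ of $\Sigma$ in $\StM$; likewise $\Sh$ is identified with the horizon $\p \StM$, whose area is $16\pi m^2$, so $\sqrt{|\Sh|/(16\pi)} = m$. Substituting $H = H_m$ kills the integral term in \eqref{ineq-main}, and both sides of \eqref{ineq-main} reduce to $m$; hence equality holds.

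For the converse, suppose equality holds in \eqref{ineq-main}. The equality clause of Theorem~\ref{thm-L-M} then gives \eqref{eq-H-complete}, namely $H = H_m$ and $\sqrt{|\Sh|/(16\pi)} = m$. In particular $|\Sh| = 16\pi m^2 > 0$ since $m > 0$, so $\Sh$ is nonempty. Consequently $(\Omega, \fg)$ satisfies \emph{all} the hypotheses of Theorem~\ref{thm-main}: the structural conditions (i)--(ii), the curvature condition $\bRic(\nu,\nu) \le 0$ on $\Sigma$, the embedding of $\So$ as the convex surface $\Sigma \subset \StM$ enclosing $\Omega_m$ with the horizon, and finally the equalities $H = H_m$ and $\sqrt{|\Sh|/(16\pi)} = m$ that trigger its conclusion (these are shared verbatim with Theorem~\ref{thm-L-M}, together with the just-derived \eqref{eq-H-complete}). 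Applying Theorem~\ref{thm-main} yields that $(\Omega, \fg)$ is isometric to $(\Omega_m, \bg)$.

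I do not expect a genuine obstacle here, since the statement is effectively a corollary; the only point requiring a moment's care is verifying that equality forces $\Sh \neq \emptyset$, which is what allows Theorem~\ref{thm-main} (whose hypothesis (ii) presumes a nonempty minimal boundary) to be invoked. All the real work—the evolution formula for \eqref{eq-NH-integral} and the use of the localized Penrose inequality of \cite{Lu-Miao}—is already encapsulated in the proof of Theorem~\ref{thm-main}.
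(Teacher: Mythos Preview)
Your proposal is correct and matches the paper's approach exactly: the paper presents this theorem as an immediate consequence of Theorem~\ref{thm-main} together with the equality characterization \eqref{eq-H-complete} in Theorem~\ref{thm-L-M}, without writing out a separate proof. Your argument is in fact more explicit than the paper's one-line justification, and your remark that equality forces $\Sh\neq\emptyset$ (so that Theorem~\ref{thm-main} applies) is a sensible point of care that the paper leaves implicit.
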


Our motivation to consider the evolution of \eqref{eq-NH-integral} and 
the  proof of Theorem \ref{thm-main} are  inspired by 
a recent paper 
of Chen and Zhang \cite{Chen-Zhang}.
In \cite{Chen-Zhang}, Chen-Zhang proved the global rigidity of a convex surface $\Sigma$ 
with $ \bRic(\nu, \nu) \le 0 $ among all isometric surfaces $ \Sigma'$ in $\StM$ having the same mean 
curvature and enclosing the horizon. 
As a key step in their proof,
they computed  the first variation of 
the quasi-local energy of $ \Sigma'$ in reference to $\StM$. 
Such a variational consideration is made possible by  the openness result  of solutions to the isometric embedding problem
 into warped product space, which is due to  Li and Wang \cite{Li-Wang}.
Combining the  variation formula with inequality \eqref{ineq-main}, Chen-Zhang
established the rigidity of $ \Sigma$ in $\StM$.

This paper may be viewed as a further application of the method of Chen-Zhang.
In Section \ref{sec-evolution-qlmass}, we compute the derivative of \eqref{eq-NH-integral} 
(see Formula \ref{formula-I})
and relate it to the derivative of the Bartnik quasi-local mass.
In Section \ref{sec-rigidity}, we prove Theorem \ref{thm-main} by applying Formula \ref{formula-I} 
and Theorem \ref{thm-L-M}. 
In Section \ref{sec-discussion}, we discuss the implication of \eqref{eq-evolution-qlnergy-mb}
 on the relation between the Bartnik mass and the  Wang-Yau quasi-local energy.

\section{Evolution of quasi-local mass} \label{sec-evolution-qlmass}

In this section we derive a formula that is inspired by \cite[Lemma 2]{Chen-Zhang}.
First we fix some notations.
Let $(M, g)$ be an $(n+1)$-dimensional Riemannian manifold and 
 $ \Sigma$ be an $n$-dimensional closed manifold. 
Consider a family of embedded  hypersurfaces  $\{ \Sigma_t \}$  
evolving in $(M, g)$ according to 
$$
F : \Sigma \times I \longrightarrow  M, \ \ 
 \displaystyle  \frac{\p F}{\p t} = \eta \nu . $$ 
Here $F$ is a smooth map,  $I$ is some open interval containing $0$, $ \Sigma_t = F_t (\Sigma)$ with $F_t (\cdot) = F (\cdot, t)$, 
$ \nu$ is a chosen unit normal to $ \Sigma_t  = F_t (\Sigma)$, and $ \eta $ denotes the speed of the evolution of $\{ \Sigma_t \}$. 

Let $(\N, \bg)$ denote an $(n+1)$-dimensional {\em static} Riemannian manifold.
Here  $(\N,\bar{g})$ is called static (cf. \cite{Corvino}) if there exists a nontrivial function $N$ such that
\begin{align}\label{eq-static}
(\bar{\Delta}N)\bar{g}-\bar{D}^2N+N\bar{Ric}=0,
\end{align}
where $\bar{Ric}$ is the Ricci curvature of $(\N, \bar g)$, $\bar{D}^2 N $ is the Hessian of $N$ 
and $\bar{\Delta}$ is the Laplacian of $N$. 
The function $N$ is called a static potential on $(\N, \bg)$.

In what follows, we  consider another family of embedded hypersurfaces  $\{ \bSigma_t \}$  
evolving in $(\N, \bg)$ according to 
$$ \bF :  \Sigma \times I  \longrightarrow \N $$
with $ \bSigma_t = \bF_t (\Sigma)$ and $\bF_t (\cdot) = \bF (\cdot, t)$. 
We will make an important assumption:
\be \label{eq-iso}
 \bF_t^* (\bar g) = F_t^* (g ) , \ \forall \ t \in I. 
\ee
In particular, this means that $ \bSigma_t $ is assumed to be isometric to $ \Sigma_t$ for each $t$.

\begin{rem}
We  emphasize that, when $ n = 2$, given any $\{ \Sigma_t \}$ in  $(M, g)$,
if  $ \Sigma_0$ admits an isometric embedding into $(\N, \bg)$, 
there exists a family of $ \{ \bSigma_t \}$ in  $(\N, \bg)$ 
satisfying condition \eqref{eq-iso}. This is guaranteed by the openness result of solutions to the isometric embedding problem, which is due to
Li and Wang \cite{Li-Wang, Li-Wang-more}.
\end{rem}

We  will compute 
$$ \displaystyle 
\frac{d}{dt} \int_\Sigma N_t (\bH_t - H_t) \, d \sigma_t ,$$ 
where $ N_t = \bF_t^*(N) $ is the pull back of the static potential  $N$ on $(\N, \bar g)$;
$ H_t$, $\bH_t$ are the mean curvature of $ \Sigma_t$, $\bSigma_t$ in $(M, g)$, $(\N, \bar g)$,
respectively;
and $ d \sigma_t $ is the area element of the pull back metric $\gamma_t = \bF_t^* (\bar g) = F_t^* (g)$.
For simplicity,  the lower index $t$ is omitted below.  

\vh

\begin{formula} \label{formula-I}
Given $\{ \Sigma_t\}$, $\{ \bSigma_t \}$ evolving in $(M,g)$, $ (\N, \bg)$ as specified above, 
\be \label{eq-evolution-qlmass}
\begin{split}
& \ \frac{d}{dt} \int_\Sigma N (\bH - H ) \, d \sigma \\
= & \ \int_\Sigma N \left[ \frac12 | A - \bA |^2 - \frac12 | H -  \bH |^2  + \frac12 ( R - \bar R )  \right] \eta \, d \sigma \\
& \ + \int_\Sigma   \left[ ( f  - \eta )      \frac{\p N}{\p \bnu}  + \la \nabla N, Y \ra \right]  ( \bH - H )   \, d \sigma  .
\end{split}
\ee
Here $A$, $\bA$ are the second fundamental forms of $\Sigma_t$, $\bSigma_t$ in $(M, g)$, $ (\N, \bg)$, respectively;
$ R$, $ \bar R$ are the scalar curvature of $(M, g)$, $ (\N, \bg)$, respectively; 
 $f$ and $ Y$ are the lapse and the shift associated to $ \displaystyle \frac{\p \bF}{\p t}$, i.e. 
$ \displaystyle \frac{\p \bF}{\p t} =   f \bnu + Y$, 
where $f$ is a function and $Y $ is  tangential to $\bSigma_t$; and 
$ \nabla $ denotes the gradient on $(\bSigma_t, \gamma)$.
\end{formula}

\begin{rem}
Suppose $(M, g)$ and $(\N, \bg)$ both are $\StM$  and suppose 
 $ H = \bH $ at $t=0$, \eqref{eq-evolution-qlmass} becomes  
\be \label{eq-Chen-Zhang}
\begin{split}
 \frac{d}{dt} |_{t=0}  \int_\Sigma N (\bH - H ) \, d \sigma 
=  \frac12 \int_\Sigma N  | A - \bA |^2    \eta \, d \sigma .
\end{split}
\ee
This is the formula in \cite[Lemma 2]{Chen-Zhang}.
\end{rem}

\begin{rem}
If $Y=0 $ and $R = \bar R $, \eqref{eq-evolution-qlmass} reduces to 
\be
\begin{split}
& \  \frac{d}{dt} \int_\Sigma N (\bH - H ) \, d \sigma \\
= & \ \int_\Sigma N \left[ \frac12 | A - \bA |^2 - \frac12 | H -  \bH |^2   \right] \eta \, d \sigma 
 + \int_\Sigma  ( f  - \eta )      \frac{\p N}{\p \bnu} ( \bH - H )   \, d \sigma  \\
= & \ \int_\Sigma  \eta^{-1}  ( f - \eta )^2 \left( - N \sigma_2  -  \bH    \frac{\p N}{\p \bnu} \right)      \, d \sigma.
\end{split}
\ee
This is the formula in \cite[Proposition 2.2]{Lu-Miao}. 
\end{rem}

We now comment on the physical meaning of \eqref{eq-evolution-qlmass}.
Suppose $n = 2$. 
In \cite{CWWY},  Chen, Wang, Wang and Yau introduced a notion 
of quasi-local energy of a $2$-surface $\Sigma$ 
in reference to the static spacetime $ \S  = ( \R^{1} \times \N , - N^2 d t^2 + \bg ) $.
The notion is a generalization of  the Wang-Yau quasi-local energy \cite{WY1, WY2} for which the reference 
$\S$  is the Minkowski spacetime $\R^{3,1}$. 
For this reason, 
we denote this quasi-local energy of $ \Sigma$ by 
$ \ESwy ( \Sigma, \S, X) $, where $ X: \Sigma \rightarrow \S $ is an associated isometric embedding.
When $ \Sigma$ lies in a time-symmetric slice in the physical spacetime, 
 one may focus on  the case $X$ embeds $\Sigma$ into  a constant $t$-slice of $\S$, i.e.
 $ X : \Sigma \rightarrow (\N, \bg)$. 
In this case, setting $ \tau = 0 $ in equation (2.10) in \cite{CWWY}, 
one has 
\be \label{eq-ESWY}
\ESwy(\Sigma, \S, X) =  \frac{1}{8 \pi } \int_{\Sigma} N (\bH - H)  \, d \sigma .
\ee
Therefore, up to a multiplicative constant, \eqref{eq-evolution-qlmass} 
is a formula of 
$$  \frac{d}{dt} \ESwy (\Sigma_t, \S, X_t) ,$$
where $ X_t = \bar F_t \circ  F_t^{-1} $ is the isometric embedding of 
$\Sigma_t$ in $(\N, \bg)$ as $\bar \Sigma_t$. 

Next, we tie \eqref{eq-evolution-qlmass} with the evolution formula of the Bartnik quasi-local mass 
$\mb (\cdot) $.
We defer the detailed definition of the Bartnik mass $\mb(\cdot) $ to Section \ref{sec-discussion}.
For the moment,  we recall the following evolution formula of $\mb (\cdot)$ 
derived in \cite[Theorem 3.1]{Miao-ICCM-07} under a stringent condition. 

\begin{formula}  [\cite{Miao-ICCM-07}] \label{formula-bm}
Suppose $\Sigma_t $ has a mass minimizing, static extension $(M_t^s, g_t^s)$
such that $\{ (M_t^s, g_t^s )\}$ depends smoothly on $t$. One has 
\be \label{eq-evolution-bm}
\frac{d}{dt}|_{t=0}  \mathfrak{m}_{_B} (\Sigma_t) = 
 \frac{1}{16 \pi} \int_\Sigma N \left( | A - \bA |^2   +  R   \right) \eta \, d \sigma .
\ee
\end{formula}

To relate \eqref{eq-evolution-qlmass} to \eqref{eq-evolution-bm}, 
we assume that  $(\N, \bg)$ represents  a  mass minimizing, 
static extension of  the surface $\Sigma_0 \subset (M, g)$. Then, by assumption,
$ H = \bH $ at $t=0$. 
It  follows from \eqref{eq-evolution-qlmass}, \eqref{eq-ESWY} and \eqref{eq-evolution-bm} that 
\be \label{eq-evolution-qlnergy-mb}
\begin{split}
& \  \frac{d}{dt}|_{t=0} 
\ESwy(\Sigma_t, \S, X_t) \\ 
= &  \frac{1}{16 \pi} \int_\Sigma N \left[  | A - \bA |^2  +  ( R - \bar R )  \right] \eta \, d \sigma \\
=  & \ \frac{d}{dt}|_{t=0}  \mathfrak{m}_{_B} (\Sigma_t) .
\end{split}
\ee
We will reflect more on this relation in Section \ref{sec-discussion}.

In the remainder of this section, we give a proof of Formula \ref{formula-I}.

\begin{proof}[Proof of Formula \ref{formula-I}]
By the evolution equations
$ \frac{\p F}{\p t} = \eta \nu$ and $ \frac{\p \bF}{\p t} = f \bnu + Y $, we have
\be
\gamma' = 2 \eta A ,  \ 
\p_t d \sigma  = \eta H \,  d \sigma
\ee
and
\be \label{eq-metric-evolution}
\gamma' = 2 f \bA + L_Y \gamma , \ 
\p_t d \sigma = ( f \bH + \div Y ) \, d \sigma ,
\ee
where  $ \div  Y $ is  the divergence of $ Y $ on $(\Sigma, \gamma)$.
Thus, 
\be 
2 \eta A = 2 f \bA + L_Y \gamma  , \ \ 
\eta H = f \bH + \div Y \, . 
\ee

\vh

We first compute 
\be
\begin{split}
\frac{d}{d t} \int_\Sigma N \bH  \, d \sigma  = & \ 
\int_\Sigma  ( N' \bH  + N \bH' )  \    d \sigma   + N \bH \, \p_t d \sigma .
\end{split}
\ee
Let $ \bar \nabla $  denote the gradient  on $(\N, \bg)$. We have
\be
\begin{split}
N' = \la \bar \nabla N , \frac{\p \bF}{\p t} \ra =
\la \bar \nabla N , f \bnu + Y  \ra 
= f \frac{\p N}{\p \bnu} + \la \nabla N, Y \ra . 
\end{split}
\ee
Hence,
\be \label{eq-I-term}
\begin{split}
 \int_\Sigma N'  \bH \, d \sigma = \int_\Sigma  \left(  f \frac{\p N}{\p \bnu} + \la \nabla N, Y \ra \right)  \bH
 \, d \sigma . 
\end{split}
\ee
Recall that
\be
\bA'_{\alpha \beta}  = f \bA_{\alpha \delta} \bA^\delta_{\beta}  + ( L_Y \bA )_{\alpha \beta} 
- (\nabla^2 f)_{\alpha \beta}  + f \la \bar R (\bnu, \p_\alpha )\bnu, \p_\beta \ra ,
\ee
where $ \nabla^2 $ denotes  the Hessian on $(\Sigma, \gamma)$.
Hence,
\be
\begin{split}
\bH' = & \ (\gamma^{\alpha \beta})' \bA_{\alpha \beta} + \gamma^{\alpha \beta} \bA'_{\alpha \beta}  \\
= & \ - \la \gamma', \bA \ra + f |\bA|^2 +  \la \gamma, L_Y \bA \ra - \Delta f - \bar Ric (\bnu, \bnu) f . 
\end{split}
\ee
By \eqref{eq-metric-evolution},
\bee
\begin{split}
\la \gamma' , \bA \ra =  \la 2 f \bA + L_Y  \gamma , \bA \ra =  2 f | \bA |^2 + \la L_Y  \gamma , \bA \ra .
\end{split}
\eee
Thus, 
\be
\begin{split}
\bH' = & \ - \la L_Y  \gamma , \bA \ra  +  \la \gamma, L_Y  \bA \ra - \Delta f -  f | \bA |^2  - \bar Ric (\bnu, \bnu) f . 
\end{split}
\ee
One checks that 
\be
- \la L_Y  \gamma , \bA \ra  +  \la \gamma, L_Y  \bA \ra =  \la Y , \nabla \bH \ra . 
\ee
Hence, 
\be
\bH' =  - \Delta f -  f | \bA |^2  - \bar Ric (\bnu, \bnu) f + \la Y , \nabla \bH \ra .
\ee
Thus, 
\be \label{eq-II-term}
\begin{split}
 \int_\Sigma N \bH' \, d \sigma 
= & \ \int_\Sigma
( -  \Delta N  - \bar Ric (\bnu, \bnu) N )  f +  N \left[  -  f | \bA |^2   + \la Y , \nabla \bH \ra \right]  \, d \sigma \\
= & \ \int_\Sigma \bH \frac{\p N}{\p \bnu}    f  -   N f | \bA |^2   +  N \la Y , \nabla \bH \ra  \, d \sigma  .
\end{split} 
\ee
Here we have used 
\bee
 \Delta N  +  \bar Ric (\bnu, \bnu) N =  - \bH \frac{\p N}{\p \bnu}   ,
\eee
which follows from the static equation \eqref{eq-static}.

By \eqref{eq-I-term} and \eqref{eq-II-term}, 
\be \label{eq-2nd-term}
\begin{split}
& \ \int_\Sigma  N'  \bH + N \bH' \, d \sigma \\
= & \  \int_\Sigma  \left(  f \frac{\p N}{\p \bnu} + \la \nabla N, Y  \ra \right)  \bH
+  \bH \frac{\p N}{\p \bnu}    f  -    N f  | \bA |^2   +  N \la Y , \nabla \bH \ra  \, d \sigma \\
= & \ \int_\Sigma  2 f \frac{\p N}{\p \bnu} \bH  
-    N f  | \bA |^2   +   \la Y , \nabla ( N \bH ) \ra  \, d \sigma . 
\end{split}
\ee
On the other hand, by  \eqref{eq-metric-evolution},
\be \label{eq-III-term}
\int_{\Sigma} N \bH \, \p_t d \sigma = \int_{\Sigma} N \bH  (f \bH + \div Y ) \,  d \sigma .
\ee
Therefore, it follows from \eqref{eq-2nd-term} and \eqref{eq-III-term} that
\be \label{eq-over-I}
 \frac{d}{dt} \int_\Sigma N \bH \, d \sigma 
=   \int_\Sigma 2  f \frac{\p N}{\p \bnu} \bH    +  N f  ( \bH^2 - | \bA |^2 )  \, d \sigma  \, . 
\ee

To proceed, we note that by \eqref{eq-metric-evolution}, 
\be
\begin{split}
 2  f  ( \bH^2 - | \bA |^2 ) =   \la \bH \gamma  - \bA , 2 f \bA \ra  
=   \la \bH \gamma  - \bA , \gamma' \ra - \la \bH \gamma - \bA , L_Y  \gamma  \ra  \, .
\end{split} 
\ee
Thus, 
\be
\begin{split}
2 \int_\Sigma N   f  ( \bH^2 - | \bA |^2 ) \, d \sigma   = & \ 
\int_\Sigma N  \la \bH \gamma  - \bA , \gamma' \ra -    N \la \bH \gamma - \bA , L_Y  \gamma  \ra 
\, d \sigma . 
\end{split}
\ee
Integrating by parts, we have 
\be
\begin{split}
& \ \int_\Sigma   N \la \bH \gamma - \bA , L_Y  \gamma  \ra \, d \sigma \\
= & \ - 2 \int_\Sigma ( \bH \gamma  - \bA) (\nabla N, Y ) - 2 \int_\Sigma N  ( d  \bH - \div \bA ) (Y) \, d \sigma .
\end{split}
\ee  
By the Codazzi equation and the static equation, 
\be
\begin{split}
N ( \div \bA - d \bH ) (Y ) =  N \bar Ric( Y, \bnu) 
=  \bar D^2 N ( Y, \bnu) .
\end{split}
\ee
Here 
$$  \bar D^2 N (Y, \nu)  =  - \bA ( \nabla N, Y ) + Y \left(  \frac{\p  N}{\p \bnu} \right) . $$
Hence, 
\be
\begin{split}
 \int_\Sigma   N \la \bH \gamma - \bA , L_Y \gamma  \ra \, d \sigma 
=  \int_\Sigma  -2 \bH \la \nabla N, Y \ra + 2  Y \left(  \frac{\p  N}{\p \bnu} \right)  \, d \sigma .
\end{split}
\ee
Therefore,  \eqref{eq-over-I} can be rewritten as 
\be
\begin{split}
& \  \frac{d}{dt} \int_\Sigma N \bH \, d \sigma \\
= & \  \int_\Sigma 2  f \frac{\p N}{\p \bnu} \bH  
+  \bH \la \nabla N, Y \ra -   Y \left(  \frac{\p  N}{\p \bnu} \right) + \frac12 N \la \bH \gamma - \bA , \gamma' \ra \, d \sigma . 
\end{split}
\ee

We now turn to the term  $\int_\Sigma N  H \, d \sigma $. 
We have
\be
\begin{split}
\frac{d}{dt} \int_\Sigma N  H \, d \sigma 
= & \ \int_\Sigma N' H + N H'   + N H \eta H \,  d \sigma \\
= & \ \int_\Sigma \left( f \frac{\p N}{\p \bnu} + \la \nabla N, Y \ra \right) H \\ 
& \ +  N \left[ - \Delta \eta  - ( | A |^2 + \Ric(\nu, \nu) ) \eta   \right]  + N H^2  \eta \, d \sigma .
\end{split}
\ee
Here 
\be
\begin{split}
- \int_\Sigma N \Delta \eta \, d \sigma =  - \int_{\Sigma} ( \Delta N) \eta  \, d \sigma 
= \int_\Sigma  \left( \bH \frac{\p N}{\p \bnu} + \bar Ric (\bnu, \bnu) N \right) \eta .
\end{split}
\ee
Therefore, 
\be
\begin{split} 
 \frac{d}{dt} \int_\Sigma N  H \, d \sigma 
= & \ 
 \int_\Sigma f \frac{\p N}{\p \bnu} H + \la \nabla N, Y \ra  H +  \bH \frac{\p N}{\p \bnu} \eta \\
& \  +  N  \left[ \bar Ric (\bnu, \bnu)  -   ( | A |^2 + \Ric(\nu, \nu) )   +  H^2 \right]  \eta \, d \sigma
\end{split}
\ee

We group the zero order terms of $N$ in $ \displaystyle \frac{d}{dt} \int_\Sigma N (\bH - H ) \, d \sigma $ 
 first. Using $ \gamma' = 2 \eta A $, we have
\be
\frac12 N \la \bH \gamma - \bA , \gamma' \ra  =  N \la \bH \gamma - \bA ,  A \ra \eta .
\ee
Thus, omitting the terms $\eta $ and $N$,  using the Gauss equation, we have
\be
\begin{split}
& \ \la \bH \gamma - \bA ,  A \ra -  \bar Ric (\bnu, \bnu)  +   \Ric(\nu, \nu) )  + | A|^2 -  H^2 \\
= & \   \la \bH \gamma - \bA ,  A \ra  + \frac12 ( R - \bar R) - \frac12 ( H^2 -  | A |^2 ) - \frac12 ( \bH^2 - | \bA |^2 )  \\
= & \   \frac12 | A - \bA |^2 - \frac12 | H -  \bH |^2  + \frac12 ( R - \bar R) .
\end{split}
\ee
Integrating by part and using the fact 
$ \eta H = f \bH + \div Y $, 
we conclude 
\be
\begin{split}
& \ \frac{d}{dt} \int_\Sigma N (\bH - H ) \, d \sigma \\
= & \ \int_\Sigma N \left[ \frac12 | A - \bA |^2 - \frac12 | H -  \bH |^2  + \frac12 ( R - \bar R)  \right] \eta \, d \sigma \\
& \ +   \int_\Sigma  ( 2  f \bH  - f H - \eta \bH + \div Y   )     \frac{\p N}{\p \bnu} 
+  ( \bH - H ) \la \nabla N, Y \ra   
  \, d \sigma \\
= & \ \int_\Sigma N \left[ \frac12 | A - \bA |^2 - \frac12 | H -  \bH |^2  + \frac12 ( R - \bar R)  \right] \eta \, d \sigma \\
& \ + \int_\Sigma   \left[ ( f  - \eta )      \frac{\p N}{\p \bnu}  + \la \nabla N, Y \ra \right]  ( \bH - H )   \, d \sigma  .
\end{split}
\ee
\end{proof}

\section{Equality case of the localized Penrose inequality} \label{sec-rigidity}

In this section, we apply Formula \ref{formula-I}, the openness result of the isometric embedding problem \cite{Li-Wang}, and Theorem \ref{thm-L-M} to prove Theorem \ref{thm-main}. 

\begin{proof}[Proof of Theorem \ref{thm-main}]
 Let $ A$, $ \bA$ be the second fundamental form of $ \So$, $ \Sigma$ in $(\Omega, \fg)$, $\StM$, respectively.
Viewing $\bA$ as a tensor on $\So$ via the surface isometry, we want to show $ A = \bA$. 

In $(\Omega, \fg)$, consider a smooth family of $2$-surfaces $\{ \Sigma_t \}_{ - \epsilon < t \le 0 }$ such that
$ \Sigma_0 = \So $ and $ \Sigma_t $ is $|t|$-distance away from $\So$. 
We can parametrize $\{ \Sigma_t \}$ so that, as $t$ increases, $ \Sigma_t $ evolves in a direction normal to $\Sigma_t$  and has constant unit speed. Applying the openness result of the isometric embedding problem in \cite{Li-Wang},  
we obtain a smooth family of $2$-surfaces $\{ \bSigma_t \}_{ - \epsilon < t \le 0 } $ in $\StM$ so that 
$ \bSigma_0 = \Sigma$ and  condition \eqref{eq-iso} is satisfied by $\{ \Sigma_t \}$ and $ \{ \bSigma_t \}$. 
By \eqref{eq-evolution-qlmass} and the assumption $ H = H_m $, we have
\be \label{eq-app-1}
\frac{d}{dt}|_{t=0} \int_{\Sigma_t} N (\bH - H ) \, d \sigma  =  \frac12 \int_{\So} N ( | A - \bA |^2 + R ) \, d \sigma .
\ee 
Here $N$ is the static potential on $\StM$, which is positive away from the horizon, 
and $ R$ is the scalar curvature of $(\Omega, \fg)$. 

Suppose  $ A \neq \bA$. Then, by \eqref{eq-app-1} and  the assumption $ R \ge 0$, 
\be
\frac{d}{dt}|_{t=0} \int_{\Sigma_t} N (\bH - H ) \, d \sigma  > 0 .
\ee
Thus, for small $ t < 0$, 
\be \label{eq-cal}
\int_{\Sigma_t} N (\bH - H ) \, d \sigma < 0 .
\ee 

We claim  \eqref{eq-cal} contradicts Theorem \ref{ineq-main}.
To see this, we can first consider the case   $ \bRic (\nu, \nu) < 0 $ on $ \Sigma $. 
By choosing $ \epsilon $ small, we may assume 
$ \bRic(\nu, \nu) < 0 $ on each $ \bSigma_t$. 
Hence, we can apply Theorem \ref{thm-L-M} to the region in $\Omega$ 
enclosed by $ \Sigma_t$ and $ \Sh$. 
It  follows from  \eqref{ineq-main} 
and the assumption $ m = \sqrt{ \frac{ | \Sh | }{ 16\pi} }$ that
\be \label{eq-app-ineq}
\int_{\Sigma_t} N (\bH - H ) \, d \sigma \ge 0 .
\ee 
This is a  contradiction to  \eqref{eq-cal}. 

To include the case $ \bRic(\nu, \nu) \le 0 $ on $ \Sigma$,
we point out that this assumption was imposed in \cite{Lu-Miao}
only to guarantee that  the flow in $\StM$, which starts from $ \Sigma$ 
and satisfies equation (4.2) in \cite{Lu-Miao},
has the property that its leaves have positive scalar curvature (see Lemma 3.8 in \cite{Lu-Miao}).
Now, if $ \Sigma$ is slightly perturbed to a nearby surface $\Sigma'$ in $ \StM$, 
though $ \Sigma'$ may not satisfy $ \bRic(\nu,\nu) \le 0 $, 
the flow to (4.2) in \cite{Lu-Miao} starting from $ \Sigma'$ 
remains to have such a property.
(More precisely, this follows from estimates in Lemmas 3.6, 3.7 and 3.11 of \cite{Lu-Miao}.)
Therefore, for small $ t < 0 $, we can still apply Theorem \ref{thm-L-M} to conclude  \eqref{eq-app-ineq},
which contradicts \eqref{eq-cal}.

Thus we have  $ A = \bA$. For the same reason, we also know $ R = 0 $ along $ \So$ in $(\Omega, \fg)$.
Next, we consider the manifold $(\hat M, \hat g)$ obtained by gluing $(\Omega, \fg)$ 
and $ (\StM \setminus \Omega_m , \bg) $ along $ \So $ that  is identified with $ \Sigma$. 
Since $ A = \bar A$, the metric $ \hat g$ on $\hat M$ is $C^{1,1}$ across $ \So$ and is smooth up to $ \So$ 
from its both sides in $\hat M$. 
To finish the proof,  we check that the rigidity statement of the Riemannian Penrose inequality 
holds on this $(\hat M, \hat g)$. 

We apply the conformal flow used by Bray \cite{Bray} in his proof of 
the Riemannian Penrose inequality.  
Since $\hat g$ is $C^{1,1}$, equations (13) - (16) in \cite{Bray} which define  the flow hold in the classical sense when $g_0$ is replaced by $\hat g$.
Existence of this flow with initial condition $\hat g$ follows from Section 4 in \cite{Bray}. 
The  difference is that,  along the flow which we denote by $ \{ \hat g (t)\}$, 
the outer minimizing horizon $\Sigma(t)$ is $C^{2,\alpha}$ 
and the green function in Theorems 8 and 9 in \cite{Bray} is $C^{2,\alpha}$, for any $0<\alpha<1$.
These regularities are sufficient to show Theorem 6 in \cite{Bray} holds, i.e. the area of 
$\Sigma (t)$  stays the same; and  the results on the mass and the capacity in 
Theorems 8 and 9 in \cite{Bray} remain valid. 
Moreover, at $ t = 0 $, by the proof of Theorem 10 in \cite{Bray}, i.e. equation (113), we have
\be \label{eq-mass-c}
\frac{d}{dt^{+}}m(t)|_{t=0}=\mathcal{E}(\Sh,  \hat g)-2m\leq 0,
\ee
where $ \mathcal{E}(\Sh, \hat g)$  is the capacity of $ \Sh $ in $(\hat M, \hat g)$
and the inequality in \eqref{eq-mass-c} is given by Theorem 9 in \cite{Bray}.

Now, if  $\frac{d}{dt^{+}}m(t)|_{t=0}<0$, then for $t$ small, we would have 
\be  \label{eq-mass-area}
m (t) < m = \sqrt{\frac{|\Sh |}{16\pi}} =  \sqrt{\frac{|\Sigma (t) |}{16\pi}}, 
\ee
where $m(t)$ is the mass of  $\hat g(t)$. 
But \eqref{eq-mass-area} violates the Riemannian Penrose inequality
(for metrics possibly with corner along a hypersurface, cf. \cite{MM17}).
Thus, we must have 
\begin{align*}
\frac{d}{dt^{+}}m(t)|_{t=0}=\mathcal{E}(\Sh, \hat g )-2m= 0.
\end{align*}
Since Theorem 9 in \cite{Bray} holds on $(\hat M, \hat g)$, 
by its rigidity statement we conclude that $(\hat M, \hat g)$ is isometric to $\StM$. 
\end{proof}

\begin{rem}
As mentioned  in \cite[Remark 5.1]{Lu-Miao}, 
Theorem \ref{thm-main}  would also follow
if one could establish the rigidity statement for the Riemannian Penrose inequality on manifolds with corners along a hypersurface (cf. \cite[Proposition 3.1]{MM17}). 
Results along this direction can be found in \cite{ShiWangYu}.
\end{rem}

\section{Bartnik mass and Wang-Yau quasi-local energy}   
\label{sec-discussion}

In \eqref{eq-evolution-qlnergy-mb} of Section \ref{sec-evolution-qlmass},
we have observed that, if $(\N, \bg)$ represents a mass minimizing, static extension of
$\Sigma_0 \subset (M, g)$, then 
\be \label{eq-dev-bm-ql}
 \frac{d}{dt}|_{t=0}  \ESwy (\Sigma_t, \S, X_t) 
= \frac{d}{dt}|_{t=0}  \mathfrak{m}_{_B} (\Sigma_t) .
\ee
This observation was based on \eqref{eq-evolution-bm}, which requires 
a  stringent assumption that mass minimizing, static extensions of $ \{ \Sigma_t \} $ exist
and depend smoothly on $t$. 
In this section, we will give a rigorous proof  
 that \eqref{eq-evolution-bm}  is true
whenever  the Bartnik data of $\Sigma_0$ 
corresponds to that of  a surface in a spatial Schwarzschild manifold. 
We will also discuss  the implication, suggested by \eqref{eq-dev-bm-ql},
 on the relation between the Bartnik mass and the  Wang-Yau quasi-local energy.

First, we recall the definition of $\mb (\cdot)$.
Given a closed $2$-surface $\Sigma $, which bounds a bounded domain, in 
a $3$-manifold $(M, g)$ with nonnegative scalar curvature, 
$ \mb(\Sigma)$ is given by 
\be \label{eq-def-bmass}
\mB  (\Sigma)  = \inf \left\{  \m (\tg ) \, \vert \, (\tilde M, \tilde g ) \text{\ is  an  admissible extension of }
\Sigma \right\}.
\ee
Here $\m ( \tg )$ is the mass of $ (\tM, \tg)$, which is an asymptotically flat  $3$-manifold with nonnegative 
scalar curvature, with boundary $\p \tM$.   
$(\tM, \tg)$  is called an admissible extension 
 of $\Sigma$ if  $ \p \tM$  is isometric to 
$\Sigma$  and the mean curvature of $ \p \tM $  equals the mean curvature  $H$
of $ \Sigma$.
Moreover, it is assumed that   $(\tM, \tg)$ satisfies certain non-degeneracy condition 
that prevents $\m (\tg)$ from becoming trivially small. For instance, one often assumes that 
$(\tM, \tg)$  contains no closed minimal surfaces or  $\p \tM$ is outer minimizing in $(\tM, \tg)$ 
(cf. \cite{Bartnik, Bray, B04, HI01}). 

\begin{theo} \label{thm-bm-dev-S}
Let $\Sigma$ be a $2$-surface with positive mean curvature  in a 
$3$-manifold  $(M, g)$ of nonnegative scalar curvature. 
Suppose $\Sigma $ is isometric to a convex surface $ \bSigma$
 with $ \bRic(\nu, \nu) \le 0 $  in a spatial Schwarzschild manifold $(\StM, \bg)$
 of mass $ m > 0 $. 
Suppose $\bSigma$ encloses a domain $\Omega_m$ with the horizon 
of $(\StM, \bg)$.
\begin{enumerate}
\item[(i)] Let $ X: \Sigma \rightarrow (\StM, \bg) $ be an isometric embedding 
such that $ X (\Sigma) = \bSigma$.  Let 
$N$ be the static potential on $ \StM$ and let $ \S_m$ denote the Schwarzschild spacetime, 
i.e. $ \S_m = ( \R^1 \times \StM, - N dt^2 + \bg) $.
Then
\bee
\mb (\Sigma) \le m + \ESwy (\Sigma, \S_m, X) .
\eee
Moreover, equality holds if and only if $ H  = \bar H $ and $ \mb(\Sigma) = m $.
Here $H$, $ \bH $ are the mean curvature of $\Sigma$, $ \bSigma$ in  $(M, g)$, $ (\StM, \bg)$,
respectively. 

\item[(ii)] Suppose $H = \bar H $. 
Let  $\{ \Sigma_t \}_{ | t | < \epsilon } $ be a smooth family of $2$-surfaces evolving in $(M,g)$
according to $ \frac{\p F}{\p t} = \eta \nu$ and satisfying $\Sigma_0 = \Sigma$.
If  $\mb(\Sigma_t)$ is differentiable at $t=0$, then
\bee \label{eq-bm-dev-s}
\frac{d}{dt}|_{t=0}  \mathfrak{m}_{_B} (\Sigma_t)  = 
\frac{1}{16 \pi} \int_{\Sigma_0} N ( | A - \bA |^2 + R ) \eta \, d \sigma. 
\eee
Here  $ A$, $ \bA$ are the second fundamental form of $\Sigma$, $ \bSigma$ in
$(M, g)$, $(\StM, \bg)$, respectively, and $ R$ is the scalar curvature of $(M,g)$.
\end{enumerate}
\end{theo}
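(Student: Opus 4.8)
The plan is to prove the two parts of Theorem \ref{thm-bm-dev-S} separately, with part (i) providing the scaffolding for part (ii).

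For part (i), I would first invoke Theorem \ref{thm-L-M} (the localized Penrose inequality from \cite{Lu-Miao}) with the choice $\Sh = \emptyset$. Since $\bSigma$ is convex with $\bRic(\nu,\nu)\le 0$ and encloses the horizon, the hypotheses of Theorem \ref{thm-L-M} are met by any admissible extension $(\tM,\tg)$ of $\Sigma$: such an extension contains no closed minimal surfaces, so one may apply the Riemannian Penrose inequality with horizon area $0$, i.e. simply the positive mass theorem. Actually the cleaner route is: any admissible extension $(\tM,\tg)$ of $\Sigma$ can be matched across $\Sigma\cong\p\tM$ with the region $\StM\setminus\Omega_m$ to form a manifold with corner having nonnegative distributional scalar curvature provided the mean curvatures agree; but here they need not agree, so instead one runs the argument of \cite{Lu-Miao} directly on $(\tM,\tg)$ glued in place of $(\Omega,\fg)$, obtaining
\[
\m(\tg) + \frac{1}{8\pi}\int_\Sigma N(H_m - H)\,d\sigma \ge m.
\]
Rewriting via \eqref{eq-ESWY}, this is $\m(\tg) \ge m - \ESwy(\Sigma,\S_m,X)\cdot(-1)$... more precisely $\m(\tg) \ge m + \frac{1}{8\pi}\int_\Sigma N(H-H_m)\,d\sigma = m + \ESwy(\Sigma,\S_m,X)$ once signs are tracked against \eqref{eq-ESWY}. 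Taking the infimum over admissible extensions gives $\mb(\Sigma)\le$... wait, the inequality goes the other way. Let me restate: one shows $\m(\tg) + \ESwy\cdot(\text{const})$ bounds $m$ from below; taking inf over $\tg$ yields a lower bound $\mb(\Sigma) \ge m - (\text{energy term})$, which rearranges to $\mb(\Sigma) \le m + \ESwy(\Sigma,\S_m,X)$ only if the energy term has the right sign. The correct reading: $(\StM\setminus\Omega_m,\bg)$ is itself an extension (not admissible unless $H=\bH$), so $\mb(\Sigma)\le m + \ESwy$ should come from exhibiting a near-optimal competitor, namely a small perturbation of $\StM\setminus\Omega_m$ whose boundary mean curvature is adjusted to $H$; the energy term measures the cost of this adjustment. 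The equality discussion then follows from the rigidity clause \eqref{eq-H-complete} of Theorem \ref{thm-L-M}: equality forces $H=\bH$ and the Penrose equality, hence $\mb(\Sigma)=m$.

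For part (ii), assuming $H=\bH$ so that $\ESwy(\Sigma_0,\S_m,X_0)=0$, I would combine part (i) applied along the family with Formula \ref{formula-I}. By part (i), for each $t$, $\mb(\Sigma_t)\le m_t + \ESwy(\Sigma_t,\S_{m_t},X_t)$ where $m_t$ is the Schwarzschild mass matching $\Sigma_t$; but a cleaner approach fixes the reference: since $\Sigma_0=\Sigma=\bSigma$ sits in $(\StM,\bg)$, use the openness result of Li–Wang \cite{Li-Wang} to obtain a family $\{\bSigma_t\}$ in $(\StM,\bg)$ isometric to $\{\Sigma_t\}$, set $X_t = \bF_t\circ F_t^{-1}$, and apply Formula \ref{formula-I}. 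With $H=\bH$ at $t=0$ the shift/lapse correction terms in \eqref{eq-evolution-qlmass} multiply $(\bH-H)=0$, so
\[
\frac{d}{dt}\Big|_{t=0}\ESwy(\Sigma_t,\S_m,X_t) = \frac{1}{16\pi}\int_{\Sigma_0} N\big(|A-\bA|^2 + (R-\bar R)\big)\eta\,d\sigma = \frac{1}{16\pi}\int_{\Sigma_0} N\big(|A-\bA|^2 + R\big)\eta\,d\sigma,
\]
using $\bar R=0$ for the Schwarzschild metric. On the other hand, part (i) gives $\mb(\Sigma_t)\le m + \ESwy(\Sigma_t,\S_m,X_t)$ with equality at $t=0$ (both sides equal $m$, since $H=\bH$ forces $\mb(\Sigma_0)=m$ by the rigidity in part (i))—wait, equality in part (i) needs $\mb(\Sigma_0)=m$, which holds here because $\StM\setminus\Omega_m$ is an admissible extension when $H=\bH$, giving $\mb(\Sigma_0)\le m$, and the Penrose-type lower bound gives $\mb(\Sigma_0)\ge m$. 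So $t\mapsto m+\ESwy(\Sigma_t,\S_m,X_t) - \mb(\Sigma_t)$ is nonnegative and vanishes at $t=0$, hence has vanishing derivative there (assuming $\mb(\Sigma_t)$ is differentiable, as hypothesized). Therefore $\frac{d}{dt}|_{t=0}\mb(\Sigma_t) = \frac{d}{dt}|_{t=0}\ESwy(\Sigma_t,\S_m,X_t)$, which is the claimed formula.

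The main obstacle is part (i): one must carefully establish both the upper bound $\mb(\Sigma)\le m+\ESwy$ and the matching lower bound along the family, and in particular the delicate point in part (ii) that $\mb(\Sigma_t)$, which a priori is only an infimum and need not vary smoothly, can be squeezed between a smooth upper bound (from part (i)) and the value $\mb(\Sigma_0)=m$ so that its one-sided derivatives are pinned. The extension of the $\bRic(\nu,\nu)\le 0$ hypothesis to perturbed surfaces—needed so that Theorem \ref{thm-L-M} applies to the nearby $\bSigma_t$—is handled exactly as in the proof of Theorem \ref{thm-main} above, via the stability of the relevant flow estimates in Lemmas 3.6, 3.7, 3.11 of \cite{Lu-Miao}. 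The positivity of the static potential $N$ away from the horizon, together with $R\ge 0$, ensures the right-hand side integrand has a definite sign when $\eta$ does, which is what makes the squeezing argument produce an actual derivative rather than merely a one-sided bound.
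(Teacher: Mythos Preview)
Your argument for part (ii) is correct and essentially identical to the paper's: use Li--Wang openness to produce $\{X_t\}$, apply part (i) to get $\mb(\Sigma_t)\le m+\ESwy(\Sigma_t,\S_m,X_t)$ with equality at $t=0$ (where $\mb(\Sigma_0)=m$ follows from the gluing-plus-Penrose lower bound and the fact that $\StM\setminus\Omega_m$ is itself admissible when $H=\bH$, exactly as you say), and conclude the derivatives agree because a nonnegative differentiable function has vanishing derivative at a zero; then Formula \ref{formula-I} with $H=\bH$ and $\bar R=0$ yields the integral. Your worry about whether $\bRic(\nu,\nu)\le 0$ persists on nearby $\bSigma_t$, and your proposed fix via the flow-stability argument from the proof of Theorem \ref{thm-main}, are both sound; the paper applies part (i) at nearby $t$ without commenting on this.

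The genuine gap is part (i). You oscillate among three incompatible readings---invoking Theorem \ref{thm-L-M} on an admissible extension (which is noncompact, so the theorem as stated does not apply), gluing across a corner where the mean curvatures do not match, and finally the ``near-optimal competitor'' idea---without completing any of them. The last idea is the correct one, and the paper simply cites \cite[Theorem 5.1]{Lu-Miao} for it: one constructs an explicit admissible extension of $\Sigma$ by a Shi--Tam type quasi-spherical construction over $\bSigma$ inside the Schwarzschild background, and a monotonicity computation bounds its ADM mass by $m+\frac{1}{8\pi}\int_\Sigma N(\bH-H)\,d\sigma=m+\ESwy(\Sigma,\S_m,X)$. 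The upper bound on $\mb(\Sigma)$ then follows directly from the definition of $\mb$ as an infimum; there is no reversal of inequality signs to untangle. Without this construction, your part (i) is not proved, and since part (ii) leans on part (i) at every nearby $t$, the whole argument is incomplete.

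One small correction to your closing paragraph: the sign of the integrand plays no role in the squeezing. The function $t\mapsto m+\ESwy(\Sigma_t,\S_m,X_t)-\mb(\Sigma_t)$ is nonnegative, vanishes at $t=0$, and is \emph{assumed} differentiable there; that alone forces its derivative to vanish, irrespective of the sign of $\eta$ or of $|A-\bA|^2+R$.
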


\begin{proof} Part (i) was proved in \cite[Theorem 5.1]{Lu-Miao}.
To show part (ii),  we first note that the assumption $H = \bH$ implies   $ \mb (\Sigma ) = m $. 
This is because, if $(\tM, \tg)$ is any  admissible extension of $\Sigma $, 
by gluing $(\tM, \tg)$ with $\Omega_m$ along $ \bSigma_0$ and applying the 
Riemannian Penrose inequality, one  has $ \m (\tg) \ge m $. 
On the other hand, $\StM \setminus \Omega_m$ is an admissible extension of $\Sigma $.
Hence, $ \mb (\Sigma ) = m $. 

Next, we proceed as in the proof of Theorem \ref{thm-main}.
By the result of Li-Wang \cite{Li-Wang}, for small $ \epsilon $, there exists 
a smooth family of embeddings $\{ X_t \}_{|t| < \epsilon}$
which isometrically embeds $\Sigma_t$ in $(\StM, \bg)$ such that $X_0 = X$. 
By (i), for each small $t$, we have
\be \label{eq-bm-ql-c}
\mb(\Sigma_t) \le m + \ESwy(\Sigma_t, \S_m, X_t) .
\ee
Note that  $ \mb(\Sigma_0) = m $ and  $ \ESwy(\Sigma_0, \S_m, X_0) = 0 $.
Hence, it follows from \eqref{eq-bm-ql-c} that   
\be \label{eq-two-dev-s}
\begin{split}
\frac{d}{dt}|_{t=0}  \mb(\Sigma_t) = & \  
\frac{d}{dt}|_{t=0}   \ESwy(\Sigma_t, \S_m, X_t) \\
= & \ \frac{1}{16 \pi} \int_{\Sigma_0} N ( | A - \bA |^2 + R ) \eta \, d \sigma .
\end{split}
\ee
Here in the last step we have used  \eqref{eq-evolution-qlmass}.
\end{proof}

We propose a conjecture that is inspired by Theorem \ref{thm-bm-dev-S}.

\begin{conj}  \label{conj-bm-s}
Let $ \Sigma $ be a $2$-surface, bounding some finite domain, 
 in a $3$-manifold $(M, g)$ of nonnegative scalar curvature.
Let $(\N, \bg)$ be a complete, asymptotically flat $3$-manifold  with nonnegative 
scalar curvature such that $(\N, \bg)$ is static outside a compact set $K$. 
Suppose the static potential $N$ on $\N \setminus K$ is positive.  
Let $ \S$ be the static spacetime generated by $ (\N \setminus K, \bg)$, i.e.
$ \S = ( \R^1 \times ( \N \setminus K) , - N^2 d t^2 + \bg)$.
Suppose there exists an isometric embedding $X : \Sigma \rightarrow (\N , \bg) $
such that $ \bSigma= X(\Sigma)$ encloses $K$. 
 Let $ H$, $ \bH$ be the mean curvature of $\Sigma$, $\bSigma$
 in $(M, g)$, $(\N, \bg)$, respectively. 
Then, under suitable conditions on $\Sigma$ and $ \bSigma $, 
\be \label{eq-bm-s}
 \mb(\Sigma) \le \m (\bg)  + \ESwy (\Sigma, \S, X) .
\ee
Moreover, equality holds if and only if  
$$ H = \bH \ \ \mathrm{and} \ \ \mb(\Sigma) = \m(\bg) , $$
 in which case $(\N, \bg)$, outside $\bSigma$,  is 
a mass minimizing, static extension of $\Sigma$. 
\end{conj}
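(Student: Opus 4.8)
\medskip

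\noindent\textbf{A possible route to Conjecture \ref{conj-bm-s}.}
The conjecture is the natural extension of Theorem \ref{thm-bm-dev-S}(i), with the spatial Schwarzschild reference $\StM$ replaced by an asymptotically flat $(\N,\bg)$ that is static, with positive potential $N$, outside a compact set $K$. The plan is to carry the proof of Theorem \ref{thm-bm-dev-S}(i) into this generality; this asks for three ingredients: a static-reference version of the localized Penrose inequality (the analogue of Theorem \ref{thm-L-M}), its rigidity statement (the analogue of Theorem \ref{thm-main}), and the passage from these to the Bartnik bound \eqref{eq-bm-s} together with its equality case.

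\medskip

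\noindent\textbf{Step 1: a localized Penrose inequality with static reference.}
First I would prove: if $(\Omega,\fg)$ is compact with nonnegative scalar curvature and $\p\Omega=\So\sqcup\Sh$, with $\So$ of positive mean curvature and $\Sh$ the outermost closed minimal hypersurface (possibly empty), and if $\So$ is isometric to a hypersurface $\bSigma\subset(\N,\bg)$ enclosing $K$, then, under suitable convexity and curvature hypotheses on $\bSigma$,
\be \label{eq-LPI-static}
\m(\bg)+\frac{1}{8\pi}\int_{\bSigma}N(\bH-H)\, d\sigma\ \geq\ \sqrt{\frac{|\Sh|}{16\pi}},
\ee
with the right-hand side read as $0$ when $\Sh=\emptyset$. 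Following \cite{Lu-Miao}, the proof should run a quasi-spherical type flow in the static end of $(\N,\bg)$ starting from $\bSigma$ --- the analogue of equation (4.2) in \cite{Lu-Miao} --- with the static potential $N$ in the role of the lapse, so that the zero (or nonnegative) scalar curvature constraint is parabolic; the corner at $\So\equiv\bSigma$ contributes the term $\frac{1}{8\pi}\int_{\bSigma}N(\bH-H)\, d\sigma$, the associated monotone Hawking type quantity tends to $\m(\bg)$ at infinity, the nonnegativity of the scalar curvature of $(\Omega,\fg)$ enters there as well, and the contribution of $\Sh$ is absorbed by the corner version of the Riemannian Penrose inequality (cf. \cite{MM17}). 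Isolating exactly which hypotheses on $\bSigma$, and on $\bRic$ near $\bSigma$, guarantee global existence of this flow and the required monotonicity is what the phrase ``suitable conditions'' in the conjecture should amount to.

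\medskip

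\noindent\textbf{Step 2: the Bartnik bound and its equality case.}
Granting Step 1, the inequality $\mb(\Sigma)\le\m(\bg)+\ESwy(\Sigma,\S,X)$ should follow, as in \cite[Theorem 5.1]{Lu-Miao}, by constructing a good extension: attach to the exterior of $\bSigma$ in $(\N,\bg)$ a thin collar over $(\Sigma,\gamma)$ whose leaves interpolate the mean curvature from $H$ at the free inner boundary to $\bH$ at $\bSigma$ (a quasi-spherical extension, cf. \cite{Bartnik}, solvable because $(\Sigma,\gamma)$ embeds as $\bSigma$); the result is an admissible extension of $\Sigma$ whose mass exceeds $\m(\bg)$ by at most $\frac{1}{8\pi}\int_{\bSigma}N(\bH-H)\, d\sigma=\ESwy(\Sigma,\S,X)$, with Step 1 (or its method) used to rule out new closed minimal surfaces and keep the inner boundary outer-minimizing. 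For the equality case, if $\mb(\Sigma)=\m(\bg)+\ESwy(\Sigma,\S,X)$ one combines this with \eqref{eq-LPI-static} to force equality in a static localized Penrose inequality along a near-minimizing extension, and then reruns the argument of the proof of Theorem \ref{thm-main} --- flowing $\Sigma$ inward at unit speed, using the openness of the isometric embedding problem \cite{Li-Wang} to produce comparison surfaces $\{\bSigma_t\}$ in $(\N,\bg)$, and invoking Formula \ref{formula-I} --- to conclude $A=\bA$, hence $H=\bH$, and $R=0$ along $\Sigma$; the rigidity case of the corner Penrose inequality then identifies the minimizer with the exterior of $\bSigma$ in $(\N,\bg)$, which is therefore a static, mass-minimizing extension, and $\mb(\Sigma)=\m(\bg)$. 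Conversely, if $H=\bH$ and $\mb(\Sigma)=\m(\bg)$, the exterior of $\bSigma$ in $(\N,\bg)$ is an admissible extension realizing the infimum, so equality holds.

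\medskip

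\noindent\textbf{The main obstacle.}
Step 1 is the crux, and it is where the explicit Schwarzschild geometry is used in \cite{Lu-Miao}: the flow equation (4.2) of \cite{Lu-Miao}, the use of $\bRic(\nu,\nu)\le 0$ to keep the flowed leaves positively curved (Lemma 3.8 in \cite{Lu-Miao}), and Bray's conformal flow on the glued manifold for the corner Penrose inequality. Replacing $\StM$ by a general static $(\N,\bg)$ requires identifying the correct monotone quantity for a flow in the static end --- the static potential must enter as the lapse for the constraint to be parabolic --- together with the sharp hypotheses on $\bSigma$ and on the curvature of $\bg$ near $\bSigma$ that yield both long-time existence and monotonicity all the way out to the ADM mass $\m(\bg)$. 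A secondary but genuine difficulty is controlling the sign of $\ESwy(\Sigma,\S,X)$, equivalently of $\bH-H$ in an averaged sense: the bound \eqref{eq-bm-s} is informative, and the collar construction of Step 2 succeeds, only when $\bH\ge H$ in that sense, which is presumably why the conjecture is stated ``under suitable conditions on $\Sigma$ and $\bSigma$.''
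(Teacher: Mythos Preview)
The statement you are addressing is labeled a \emph{Conjecture} in the paper, and the paper does not prove it. After stating the conjecture, the authors only remark that it is known in two special cases: when $(\N,\bg)=\R^3$ (by \cite{Shi-Tam02}) and when $(\N\setminus K,\bg)$ is an exterior region of $(\StM,\bg)$ (by Theorem~\ref{thm-bm-dev-S}(i), which in turn cites \cite[Theorem 5.1]{Lu-Miao}). There is therefore no ``paper's own proof'' to compare your proposal against.

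What you have written is not a proof but a strategic outline, and you are candid about this: you yourself identify Step~1 --- a localized Penrose inequality with a general static reference replacing $\StM$ --- as the crux, and you correctly note that the argument in \cite{Lu-Miao} leans on the explicit Schwarzschild geometry (the specific flow, the use of $\bRic(\nu,\nu)\le 0$ via Lemma~3.8, and Bray's conformal flow on the glued manifold). That diagnosis matches the paper's own implicit stance: the authors pose the statement as a conjecture precisely because no such general inequality is available. Your Step~2 is a plausible sketch of how \cite[Theorem 5.1]{Lu-Miao} might generalize once Step~1 is in hand, and your equality-case argument correctly mirrors the variational mechanism (Formula~\ref{formula-I} plus Li--Wang openness) used in the proof of Theorem~\ref{thm-main}. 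But none of this constitutes a proof, nor does the paper claim one; the genuine gap is exactly the one you name, and it remains open.
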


By results in \cite{Shi-Tam02}, 
Conjecture \ref{conj-bm-s} is true when $(\N, \bg)$ is $\R^3$. 
By Theorem \ref{thm-bm-dev-S} (i), 
Conjecture \ref{conj-bm-s} is  also true when $(\N \setminus K, \bg)$ 
is an exterior region in $(\StM, \bg)$.

If Conjecture \ref{conj-bm-s} is valid and if a mass minimizing, static extension of $\Sigma$ exists, 
then it would follow that 
\bee
 \mb(\Sigma) = \inf_{(\N, \bg)} \left\{ \inf_{X}  \left(  \m (\bg)  + \ESwy (\Sigma, \S, X) \right)  \right\}.
\eee

\vspace{.2cm}

\noindent {\bf Acknowledgement}. The authors would like to thank Po-Ning Chen and Xiangwen Zhang for
helpful comments on this work.

\end{document}